\DeclareMathOperator{\Con}{Con}
\DeclareMathOperator{\Id}{Id}
\DeclareMathOperator{\BId}{\mathbf{Id}}
\DeclareMathOperator{\BCon}{\mathbf{Con}}
\newtheorem{definition}{Definition}
\newtheorem{theorem}[definition]{Theorem}
\newtheorem{remark}[definition]{Remark}
\newtheorem{example}[definition]{Example}
\newtheorem{corollary}[definition]{Corollary}
\title{Ideals of direct products of rings}
\author{Ivan~Chajda, G\"unther Eigenthaler and Helmut~L\"anger}
\date{}
\begin{document}
\footnotetext[1]{Support of the research of all three authors by \"OAD, project CZ~04/2017, support of the first and the third author by IGA, project P\v rF~2018~012, as well as support of the second and the third author by the Austrian Science Fund (FWF), project I~1923-N25, is gratefully acknowledged.}
\footnotetext[2]{Electronic version of an article published in Asian-European Journal of Mathematics, Vol.\ 11, No.\ 4, 2018, pages 1850094-1--1850094-6, DOI: 10.1142/S1793557118500948\;\textcopyright\;World Scientific Publishing Company, https://www.worldscientific.com/worldscinet/aejm}
\maketitle
\begin{abstract}
It is known that an ideal of a direct product of commutative unitary rings is directly decomposable into ideals of the corresponding factors. We show that this does not hold in general for commutative rings and we find necessary and sufficient conditions for direct decomposability of ideals. For varieties of commutative rings we derive a Mal'cev type condition characterizing direct decomposability of ideals and we determine explicitly all varieties satisfying this condition.
\end{abstract}

{\bf AMS Subject Classification:} 13A15, 16R40, 08B05

{\bf Keywords:} Commutative ring, ring ideal, direct product, directly decomposable ideal, Mal'cev condition, variety of commutative rings

Recall that an ideal of a ring $\mathbf R=(R,+,\cdot)$ is a non-empty subset $I$ of $R$ such that if $a,b\in I$ then $a-b\in I$ and $ar,ra\in I$ for every $r\in R$. For other concepts used here the reader is referred to any monograph on rings, see e.g.\ \cite A and \cite L. Ideals play a crucial role in the theory of rings since the kernels of homomorphisms are ideals and rings can be factorized by means of ideals.

It is an elementary fact that in rings, ideals and congruences are in a one-to-one correspondence. Hence, if $\mathbf R=(R,+,\cdot)$ is a ring then the ideal lattice $\BId\mathbf R:=(\Id\mathbf R,\subseteq)$ and the congruence lattice $\BCon\mathbf R:=(\Con\mathbf R,\subseteq)$ are isomorphic. Hence $\BId\mathbf R$ is a modular bounded lattice with the least element $\{0\}$ and the greatest element $R$ where the supremum and infimum of ideals $I_1,I_2$ are given by $I_1\vee I_2=I_1+I_2$ and $I_1\wedge I_2=I_1\cap I_2$, respectively.

Having two rings $\mathbf R_1=(R_1,+,\cdot)$ and $\mathbf R_2=(R_2,+,\cdot)$, it is elementary that for $I_1\in\Id\mathbf R_1$ and $I_2\in\Id\mathbf R_2$ we have $I_1\times I_2\in\Id(\mathbf R_1\times\mathbf R_2)$. On the other hand, if $I\in\Id(\mathbf R_1\times\mathbf R_2)$ then there need not exist $I_1\in\Id\mathbf R_1$ and $I_2\in\Id\mathbf R_2$ with $I_1\times I_2=I$. If such ideals $I_1,I_2$ do not exist, then $I$ is called {\em skew}. Otherwise, $I$ will be called {\em directly decomposable}. For commutative rings $\mathbf R_1,\mathbf R_2$ we will derive conditions under which $\mathbf R_1\times\mathbf R_2$ has no skew ideals.

We say that a direct product of finitely many rings has {\em directly decomposable ideals} if every ideal of this product is a direct product of ideals of the corresponding factors.

For sets $M_1,M_2$ and $i\in\{1,2\}$ let $\pi_i$ denote the $i$-th projection from $M_1\times M_2$ onto $M_i$.

It is easy to see that an ideal $I$ of $\mathbf R_1\times\mathbf R_2$ is directly decomposable if and only if $\pi_1(I)\times\pi_2(I)=I$ which is equivalent to $\pi_1(I)\times\pi_2(I)\subseteq I$.

Direct decomposability of ideals in commutative rings was used by the first and the third author in their study of complementation in ideal lattices, see \cite{CL}.

Let $\mathbf K_4=(\{0,a,b,c\},+)$ denote the four-element Kleinian group whose operation table for $+$ looks as follows:
\[
\begin{array}{c|cccc}
+ & 0 & a & b & c \\
\hline
0 & 0 & a & b & c \\
a & a & 0 & c & b \\
b & b & c & 0 & a \\
c & c & b & a & 0
\end{array}
\]
We show an example of rings whose direct product contains a skew ideal.

\begin{example}\label{ex2}
Consider the zero-ring $\mathbf K=(K,+,\cdot)$ whose additive group is $\mathbf K_4$, i.e., $xy=0$ for all $x,y\in K$. Then the principal ideal $I(a,c)=\{(0,0),(a,c)\}$ of $\mathbf K\times\mathbf K$ generated by $(a,c)$ is skew since $I\neq\pi_1(I)\times\pi_2(I)$.
\end{example}

In what follows, we derive necessary and sufficient conditions under which an ideal of the direct product of rings is directly decomposable. For this purpose, we borrow the method developed by Fraser and Horn (\cite{FH}) for congruences.

\begin{theorem}\label{th1}
Let $\mathbf R_1=(R_1,+,\cdot)$ and $\mathbf R_2=(R_2,+,\cdot)$ be rings and $I\in\Id(\mathbf R_1\times\mathbf R_2)$. Then the following are equivalent:
\begin{enumerate}
\item[{\rm(i)}] $I$ is directly decomposable.
\item[{\rm(ii)}] $(R_1\times\{0\})\cap((\{0\}\times R_2)+I)\subseteq I$\text{ and} \\
$((R_1\times\{0\})+I)\cap(\{0\}\times R_2)\subseteq I$.
\item[{\rm(iii)}] If $(a,b)\in I$ then $(a,0),(0,b)\in I$.
\item[{\rm(iv)}] $((R_1\times\{0\})+I)\cap((\{0\}\times R_2)+I)=I$.
\end{enumerate}
\end{theorem}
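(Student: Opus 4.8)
The plan is to reduce each of the four conditions to the single statement $\pi_1(I)\times\pi_2(I)=I$, which by the remark preceding the theorem is equivalent to direct decomposability. The engine of the reduction is a pair of saturation identities that I would establish first:
\[
(\{0\}\times R_2)+I=\pi_1(I)\times R_2,\qquad(R_1\times\{0\})+I=R_1\times\pi_2(I).
\]
For the first, a typical element is $(0,r)+(p,q)=(p,r+q)$ with $r\in R_2$ and $(p,q)\in I$: the first coordinate ranges exactly over $\pi_1(I)$, while for each fixed witness $(p,q)$ the entry $r+q$ ranges over all of $R_2$ as $r$ does, because $(R_2,+)$ is a group. The second identity is symmetric. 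Note that only the additive group structure of $I$ enters here; the multiplicative closure of the ideal is never used in these equivalences.

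With the identities available, conditions (ii) and (iv) become statements purely about the projections. Intersecting the first identity with $R_1\times\{0\}$ yields $(R_1\times\{0\})\cap((\{0\}\times R_2)+I)=\pi_1(I)\times\{0\}$, so the first half of (ii) asserts exactly $\pi_1(I)\times\{0\}\subseteq I$, and symmetrically its second half asserts $\{0\}\times\pi_2(I)\subseteq I$. Intersecting the two identities with each other gives $((R_1\times\{0\})+I)\cap((\{0\}\times R_2)+I)=\pi_1(I)\times\pi_2(I)$, so (iv) reads $\pi_1(I)\times\pi_2(I)=I$; by the preceding remark this is precisely (i), and hence (i)$\Leftrightarrow$(iv) is immediate.

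It remains to connect (ii) and (iii). Here I would use that $a\in\pi_1(I)$ holds iff $(a,b)\in I$ for some $b$: under this correspondence the condition ``$(a,0)\in I$ for every $a\in\pi_1(I)$'' coincides with ``$(a,0)\in I$ whenever $(a,b)\in I$'', and likewise in the second coordinate, which gives (ii)$\Leftrightarrow$(iii). Finally, (ii) furnishes $(a,0),(0,b)\in I$ for all $a\in\pi_1(I)$ and $b\in\pi_2(I)$, whence $(a,b)=(a,0)+(0,b)\in I$, so $\pi_1(I)\times\pi_2(I)\subseteq I$ and (i) follows; conversely (i) delivers both inclusions of (ii) at once, since $0\in\pi_1(I)\cap\pi_2(I)$ because $(0,0)\in I$. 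The computations are entirely routine once the right sets are in view, so I expect no serious obstacle; the only delicate point is the saturation identities, where the essential hypothesis is that the factors are additive groups, allowing one summand to sweep out a whole fibre.
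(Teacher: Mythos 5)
Your proof is correct, and it is organized genuinely differently from the paper's. The paper proves the cycle (i)$\Rightarrow$(ii)$\Rightarrow$(iii)$\Rightarrow$(i) together with (i)$\Rightarrow$(iv)$\Rightarrow$(ii), and its forward implications are direct computations that start from a given decomposition $I=I_1\times I_2$; in particular, the identity $(\{0\}\times R_2)+(I_1\times I_2)=I_1\times R_2$ appears there only in this special, already-decomposed form. Your key move is to prove the saturation identities $(\{0\}\times R_2)+I=\pi_1(I)\times R_2$ and $(R_1\times\{0\})+I=R_1\times\pi_2(I)$ for an \emph{arbitrary} ideal $I$, which lets you rewrite (ii) as $\pi_1(I)\times\{0\}\subseteq I$ together with $\{0\}\times\pi_2(I)\subseteq I$, and (iv) as $\pi_1(I)\times\pi_2(I)=I$; all four conditions then become statements about projections, and the equivalences fall out at once (using the remark preceding the theorem, which the paper itself supplies, so relying on it is legitimate). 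What your route buys: it isolates the one nontrivial mechanism (a group summand sweeps out a whole fibre), it makes (i)$\Leftrightarrow$(iv) a literal restatement rather than a computation, and it exposes the fact --- not made explicit in the paper --- that the multiplicative structure is never used, so the theorem is really a statement about additive subgroups of a product of groups. What the paper's route buys: each implication is a short, self-contained verification requiring no preliminary lemma, and it follows the Fraser--Horn template for congruences cited in the text. One small point worth making explicit in your write-up: $(0,0)\in I$ because $I$ is nonempty and closed under differences; you invoke this (correctly) when deducing both halves of (ii) from (i). Otherwise every step checks out.
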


\begin{proof}
$\text{}$ \\
(i) $\Rightarrow$ (ii): \\
If $I=I_1\times I_2$ then
\begin{align*}
(R_1\times\{0\})\cap((\{0\}\times R_2)+I) & =(R_1\times\{0\})\cap((\{0\}\times R_2)+(I_1\times I_2))= \\
                                          & =(R_1\times\{0\})\cap(I_1\times R_2)=I_1\times\{0\}\subseteq I, \\
((R_1\times\{0\})+I)\cap(\{0\}\times R_2) & =((R_1\times\{0\})+(I_1\times I_2))\cap(\{0\}\times R_2)= \\
                                          & =(R_1\times I_2)\cap(\{0\}\times R_2)=\{0\}\times I_2\subseteq I.
\end{align*}
(ii) $\Rightarrow$ (iii): \\
If $(a,b)\in I$ then
\begin{align*}
(a,0) & \in(R_1\times\{0\})\cap((\{0\}\times R_2)+I)\subseteq I, \\
(0,b) & \in((R_1\times\{0\})+I)\cap(\{0\}\times R_2)\subseteq I.
\end{align*}
(iii) $\Rightarrow$ (i): \\
If $(a,b)\in\pi_1(I)\times\pi_2(I)$ then there exists some $(c,d)\in R_1\times R_2$ with $(a,d),(c,b)\in I$, hence $(a,0),(0,b)\in I$ which shows $(a,b)=(a,0)+(0,b)\in I$. \\
(i) $\Rightarrow$ (iv): \\
If $I=I_1\times I_2$ then
\begin{align*}
& ((R_1\times\{0\})+I)\cap((\{0\}\times R_2)+I)= \\
& =((R_1\times\{0\})+(I_1\times I_2))\cap((\{0\}\times R_2)+(I_1\times I_2))=(R_1\times I_2)\cap(I_1\times R_2)= \\
& =I_1\times I_2=I.
\end{align*}
(iv) $\Rightarrow$ (ii): \\
This follows immediately.
\end{proof}

The following result is already known but it follows easily from the equivalence of (i) and (iii) in Theorem~\ref{th1}.

\begin{corollary}
If $\mathbf R_1$ and $\mathbf R_2$ are unitary rings then $I$ is directly decomposable since $(a,b)\in I$ implies $(a,0)=(a,b)(1,0)\in I$ and $(0,b)=(a,b)(0,1)\in I$ and hence {\rm(iii)} holds.
\end{corollary}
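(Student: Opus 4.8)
The plan is to invoke the equivalence of (i) and (iii) established in Theorem~\ref{th1}, so that it suffices to verify condition (iii) for an arbitrary ideal $I$ of $\mathbf R_1\times\mathbf R_2$. The decisive feature of unitary rings is that each factor carries a multiplicative identity, which I denote by $1$ in both $R_1$ and $R_2$. This makes the two complementary idempotents $(1,0)$ and $(0,1)$ available as genuine elements of $R_1\times R_2$, and an ideal is by definition closed under multiplication by arbitrary ring elements.

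Concretely, I would take any $(a,b)\in I$ and form the products $(a,b)(1,0)=(a,0)$ and $(a,b)(0,1)=(0,b)$. Since $I$ absorbs multiplication by elements of $R_1\times R_2$, both $(a,0)$ and $(0,b)$ must lie in $I$. This is precisely the hypothesis of condition (iii), so that condition is satisfied for every ideal of the product.

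Having verified (iii), the implication (iii) $\Rightarrow$ (i) of Theorem~\ref{th1} then yields at once that $I$ is directly decomposable. There is no genuine obstacle in the argument: everything rests on the presence of the identity elements that let $(1,0)$ and $(0,1)$ serve as multipliers. Without units one cannot force these coordinatewise ``projections'' of an element to remain inside the ideal, which is exactly what fails in the zero-ring of Example~\ref{ex2}, where the absence of a multiplicative identity permits a skew ideal to occur.
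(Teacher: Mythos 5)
Your proof is correct and is exactly the paper's argument: verify condition (iii) of Theorem~\ref{th1} by multiplying $(a,b)\in I$ with the idempotents $(1,0)$ and $(0,1)$, using the absorption property of ideals, and then conclude direct decomposability from (iii) $\Rightarrow$ (i). No differences worth noting.
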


\begin{corollary}
Let $\mathbf R_1,\mathbf R_2$ be rings. Then $\BId(\mathbf R_1\times\mathbf R_2)$ is distributive if and only if every ideal of $\mathbf R_1\times\mathbf R_2$ is directly decomposable and $\BId\mathbf R_1,\BId\mathbf R_2$ are distributive.
\end{corollary}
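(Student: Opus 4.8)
The plan is to treat the two implications separately, using the complementary pair of ideals $\alpha:=R_1\times\{0\}$ and $\beta:=\{0\}\times R_2$ of $\mathbf R_1\times\mathbf R_2$, which satisfy $\alpha\cap\beta=\{(0,0)\}$ and $\alpha+\beta=R_1\times R_2$; recall also that in $\BId(\mathbf R_1\times\mathbf R_2)$ the join is $+$ and the meet is $\cap$.

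For the direction ``$\Leftarrow$'', assume every ideal of $\mathbf R_1\times\mathbf R_2$ is directly decomposable and that $\BId\mathbf R_1,\BId\mathbf R_2$ are distributive. I would show that $\varphi\colon I\mapsto(\pi_1(I),\pi_2(I))$ is a lattice isomorphism from $\BId(\mathbf R_1\times\mathbf R_2)$ onto $\BId\mathbf R_1\times\BId\mathbf R_2$. It is a bijection: surjectivity holds because $I_1\times I_2$ is an ideal with $\varphi(I_1\times I_2)=(I_1,I_2)$, and injectivity follows from direct decomposability, since then $I=\pi_1(I)\times\pi_2(I)$. For directly decomposable $I,J$ one checks coordinatewise that $I\cap J$ and $I+J$ are again products, namely $\pi_i(I\cap J)=\pi_i(I)\cap\pi_i(J)$ and $\pi_i(I+J)=\pi_i(I)+\pi_i(J)$, so $\varphi$ preserves meets and joins. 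A direct product of distributive lattices is distributive, hence so is $\BId(\mathbf R_1\times\mathbf R_2)$.

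For the direction ``$\Rightarrow$'', assume $\BId(\mathbf R_1\times\mathbf R_2)$ is distributive. The key observation is that distributivity applied to the complementary pair gives, for every $I\in\Id(\mathbf R_1\times\mathbf R_2)$, the identity $(\alpha+I)\cap(\beta+I)=(\alpha\cap\beta)+I=\{(0,0)\}+I=I$, which is exactly condition (iv) of Theorem~\ref{th1}; hence $I$ is directly decomposable. To see that the factors have distributive ideal lattices, I would look at the interval $[\{(0,0)\},\alpha]$ of $\BId(\mathbf R_1\times\mathbf R_2)$: every ideal $J$ contained in $\alpha=R_1\times\{0\}$ satisfies $\pi_2(J)=\{0\}$ and therefore $J=\pi_1(J)\times\{0\}$, so the map $I_1\mapsto I_1\times\{0\}$ is a lattice isomorphism of $\BId\mathbf R_1$ onto this interval. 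Being an interval of a distributive lattice, it is distributive, whence $\BId\mathbf R_1$ is distributive; the argument for $\BId\mathbf R_2$ is symmetric.

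The proof is short once Theorem~\ref{th1} is in hand; the only real idea is to recognise $R_1\times\{0\}$ and $\{0\}\times R_2$ as a complementary pair and to apply the distributive law to them so as to land precisely on condition (iv). The step most likely to need care is verifying that $\varphi$ genuinely preserves joins in the ``$\Leftarrow$'' direction, i.e.\ that $I+J$ decomposes coordinatewise, since this is where the hypothesis of direct decomposability of all ideals is actually used.
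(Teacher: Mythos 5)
Your proof is correct and takes essentially the same approach as the paper: in the forward direction you apply distributivity to the complementary pair $R_1\times\{0\}$, $\{0\}\times R_2$ to land on condition (iv) of Theorem~\ref{th1} (the paper derives (ii) in the same way), and you realize $\BId\mathbf R_i$ as an interval of the big lattice (you use the interval below $R_1\times\{0\}$, the paper the dual filter above the kernel of $\pi_i$ --- equivalent arguments). The backward direction is the paper's componentwise computation of joins and meets of directly decomposable ideals, which you merely repackage as a lattice isomorphism onto $\BId\mathbf R_1\times\BId\mathbf R_2$.
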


\begin{proof}
If $\BId(\mathbf R_1\times\mathbf R_2)$ is distributive, then (ii) of Theorem~\ref{th1} is satisfied for every $I\in\Id(\mathbf R_1\times\mathbf R_2)$. Thus every ideal of $\mathbf R_1\times\mathbf R_2$ is directly decomposable. Moreover, for $i=1,2$ we have that $\BId\mathbf R_i$ is isomorphic to the principal filter of $\BId(\mathbf R_1\times\mathbf R_2)$ generated by the kernel of $\pi_i$. Therefore, $\BId\mathbf R_1$ and $\BId\mathbf R_2$ are distributive. Conversely, suppose that $\BId\mathbf R_1,\BId\mathbf R_2$ are distributive and every ideal of $\mathbf R_1\times\mathbf R_2$ is directly decomposable. Let $I,J\in\Id(\mathbf R_1\times\mathbf R_2)$. Then there exist $I_1,J_1\in\Id\mathbf R_1$ and $I_2,J_2\in\Id\mathbf R_2$ with $I_1\times I_2=I$ and $J_1\times J_2=J$. Now we have
\begin{align*}
  I\vee J & =(I_1\times I_2)+(J_1\times J_2)=(I_1+J_1)\times(I_2+J_2), \\
I\wedge J & =(I_1\times I_2)\cap(J_1\times J_2)=(I_1\cap J_1)\times(I_2\cap J_2).
\end{align*}
Hence, join and meet in $\BId(\mathbf R_1\times\mathbf R_2)$ are computed ``component-wise'' showing distributivity of $\BId(\mathbf R_1\times\mathbf R_2)$.
\end{proof}

Another application of Theorem~\ref{th1} is the following example:

\begin{example}\label{ex1}
If $\mathbf R_1=(R_1,+,\cdot)$ is a Boolean  ring {\rm(}i.e., $xx\approx x${\rm)} and $\mathbf R_2=(R_2,+,\cdot)$ a unitary ring then $\mathbf R_1\times\mathbf R_2$ has no skew ideals. This follows directly by {\rm(iii)} of Theorem~\ref{th1} since if $(a,b)\in I$ then $(a,0)=(a,b)(a,0)\in I$ and $(0,b)=(a,b)(0,1)\in I$.
\end{example}

Denote by $\mathbb Z$ the ring of integers. As usually, for $a\in\mathbb Z$ put $a\mathbb Z:=\{ax\mid x\in\mathbb Z\}$. Of course, for each $a\in\mathbb Z$, $a\mathbb Z$ is a commutative ring which is unitary only in case $a\in\{-1,0,1\}$. The next theorem shows that, in general, the rings $a\mathbb Z\times b\mathbb Z$ contain skew ideals. Hence, the rather exotic ring $\mathbf K\times\mathbf K$ from Example~\ref{ex2} is not the only commutative ring possessing skew ideals.

\begin{theorem}\label{th3}
Let $a,b,c,d\in\mathbb Z$ with $c|a$ and $d|b$ and consider the ideal $I(a,b)$ of $c\mathbb Z\times d\mathbb Z$ generated by $(a,b)$. Then $I(a,b)$ is directly decomposable if and only if either $a=0$ or $b=0$ or $\gcd(c,d)=1$.
\end{theorem}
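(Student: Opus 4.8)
The plan is to compute the ideal $I(a,b)$ explicitly and then apply the criterion recorded just before Theorem~\ref{th1}: an ideal $I$ of $\mathbf R_1\times\mathbf R_2$ is directly decomposable if and only if $\pi_1(I)\times\pi_2(I)\subseteq I$.

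First I would determine $I(a,b)$. Since $c\mathbb Z\times d\mathbb Z$ need not be unitary, the ideal generated by $(a,b)$ is $\mathbb Z(a,b)+(c\mathbb Z\times d\mathbb Z)(a,b)$, so its elements have the form $\bigl((n+cm)a,(n+dk)b\bigr)$ with $n,m,k\in\mathbb Z$. The decisive feature is that both coordinates share the same integer $n$. Writing $g:=\gcd(c,d)$ and using the B\'ezout identity $c\mathbb Z+d\mathbb Z=g\mathbb Z$, I would check that a pair $(ua,vb)$, where $u=n+cm$ and $v=n+dk$, lies in $I(a,b)$ exactly when $u\equiv v\pmod g$: indeed $u-v=cm-dk$ runs precisely through $g\mathbb Z$, and conversely, given $u\equiv v\pmod g$ one writes $u-v=cm-dk$ and sets $n:=v-dk$ to recover a valid triple. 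Hence
\[
I(a,b)=\{(ua,vb)\mid u,v\in\mathbb Z,\ u\equiv v\pmod g\}.
\]

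Next I would read off the projections: taking $v=u$ shows $\pi_1(I(a,b))=a\mathbb Z$ and, symmetrically, $\pi_2(I(a,b))=b\mathbb Z$, so the criterion reduces direct decomposability to the single inclusion $a\mathbb Z\times b\mathbb Z\subseteq I(a,b)$. I would then settle this inclusion by cases. If $a=0$ or $b=0$, one of the factors is $\{0\}$ and the inclusion holds at once, since every element of $\{0\}\times b\mathbb Z$ (resp.\ $a\mathbb Z\times\{0\}$) already lies in $I(a,b)$ via $u=v$. If $a\neq0$ and $b\neq0$, then $ua=u'a$ forces $u=u'$ in $\mathbb Z$, so $(ua,vb)\in I(a,b)$ holds if and only if $u\equiv v\pmod g$; thus $a\mathbb Z\times b\mathbb Z\subseteq I(a,b)$ if and only if $u\equiv v\pmod g$ for all $u,v\in\mathbb Z$, which---taking $u-v=1$---is equivalent to $g=1$, i.e.\ $\gcd(c,d)=1$. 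Combining the cases gives the asserted equivalence.

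I expect the explicit computation of $I(a,b)$ to be the main obstacle: one must treat the non-unital ring correctly and, above all, keep the two coordinates coupled through the common integer $n$, since it is exactly this coupling that---via B\'ezout---turns into the congruence $u\equiv v\pmod g$ and thereby produces the gcd criterion.
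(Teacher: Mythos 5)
Your proof is correct, and it shares its opening skeleton with the paper's: both compute the ideal generated by $(a,b)$ in the non-unitary ring $c\mathbb Z\times d\mathbb Z$ (your $\mathbb Z(a,b)+(c\mathbb Z\times d\mathbb Z)(a,b)$ is the paper's $(a,b)\mathbb Z+(ac\mathbb Z\times bd\mathbb Z)$), read off $\pi_1(I(a,b))=a\mathbb Z$ and $\pi_2(I(a,b))=b\mathbb Z$, and reduce direct decomposability to the inclusion $a\mathbb Z\times b\mathbb Z\subseteq I(a,b)$. Where you genuinely diverge is in how that inclusion is decided. The paper treats the two directions by separate computations: for $\gcd(c,d)=1$ it uses B\'ezout coefficients $ce+df=1$ to exhibit an explicit representation of an arbitrary $(ax,by)$ as an element of $I(a,b)$; for $\gcd(c,d)\neq1$ it assumes decomposability, writes out coordinate-wise what $(a,0)\in I(a,b)$ means, and extracts the relation $ch-di=1$, a contradiction. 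You instead prove one structural lemma, namely the exact description $I(a,b)=\{(ua,vb)\mid u\equiv v\ (\mathrm{mod}\ \gcd(c,d))\}$, obtained from the subgroup identity $c\mathbb Z+d\mathbb Z=\gcd(c,d)\mathbb Z$; combined with the uniqueness of $u,v$ when $a,b\neq0$ (the one place where nonvanishing is used), both directions of the theorem fall out of this single membership criterion by comparing ``$u\equiv v$ always holds'' with $\gcd(c,d)=1$. Your route costs slightly more care up front (two inclusions for the description, plus the uniqueness remark) but buys a complete picture of $I(a,b)$, from which one also reads off instantly, for instance, why $(0,2)\notin I(2,2)$ in the paper's subsequent example; the paper's argument is shorter but leaves the structure of $I(a,b)$ implicit.
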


\begin{proof}
Put $(a,b)\mathbb Z:=\{(ax,bx)\mid x\in\mathbb Z\}$. Obviously, $I(a,b)=(a,b)\mathbb Z+(ac\mathbb Z\times bd\mathbb Z)$. Since $\pi_1(I(a,b))=a\mathbb Z$ and $\pi_2(I(a,b))=b\mathbb Z$, direct decomposability of $I(a,b)$ is equivalent to $a\mathbb Z\times b\mathbb Z\subseteq I(a,b)$. If $a=0$ or $b=0$ then obviously $a\mathbb Z\times b\mathbb Z\subseteq I(a,b)$. Now assume $a,b\neq0$. If $\gcd(c,d)=1$ then there exist $e,f\in\mathbb Z$ with $ce+df=1$ and hence
\[
(ax,by)=(a,b)(dfx+cey)+((ac)(e(x-y)),(bd)(-f(x-y)))\in I(a,b)
\]
for all $x,y\in\mathbb Z$ proving direct decomposability of $I(a,b)$. Finally, assume $\gcd(c,d)\neq1$. Suppose $I(a,b)$ to be directly decomposable. Then $(a,0)\in I(a,b)$ and hence there exist $g,h,i\in\mathbb Z$ with $(a,0)=(ag+ach,bg+bdi)$. From this we conclude $g=-di$ and hence $ch-di=1$ whence $\gcd(c,d)=1$, a contradiction. Hence $I(a,b)$ is not directly decomposable in this case.
\end{proof}

Although the rings of the form $c\mathbb Z$ are rings of integers, Theorem~\ref{th3} yields the following result.

\begin{corollary}
If $c,d\in\mathbb Z$ and $\gcd(c,d)\neq1$ then $c\mathbb Z\times d\mathbb Z$ has skew ideals.
\end{corollary}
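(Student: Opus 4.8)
The plan is to read this off directly from Theorem~\ref{th3}: to show that $c\mathbb Z\times d\mathbb Z$ possesses a skew ideal it suffices to exhibit a single non-directly-decomposable ideal, and Theorem~\ref{th3} already classifies exactly when the generated ideals $I(a,b)$ fail to decompose. So the whole task reduces to choosing integers $a,b$ that satisfy the hypotheses $c\mid a$ and $d\mid b$ while landing in the skew case of that theorem.

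The obvious choice is $a:=c$ and $b:=d$. Then $c\mid a$ and $d\mid b$ hold trivially, so the ideal $I(c,d)$ of $c\mathbb Z\times d\mathbb Z$ generated by $(c,d)$ is covered by Theorem~\ref{th3}. Assuming, as the statement tacitly requires, that $c\neq0$ and $d\neq0$ (if one factor is the zero ring $\{0\}$ then every ideal of the product projects trivially and is decomposable, so this degenerate case must be excluded), I get $a=c\neq0$ and $b=d\neq0$. Since moreover $\gcd(c,d)\neq1$ by hypothesis, none of the three alternatives ``$a=0$ or $b=0$ or $\gcd(c,d)=1$'' in Theorem~\ref{th3} is satisfied. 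Hence $I(c,d)$ is not directly decomposable, i.e.\ it is skew, and therefore $c\mathbb Z\times d\mathbb Z$ has skew ideals.

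There is essentially no obstacle once Theorem~\ref{th3} is available; the corollary is just the special case $a=c$, $b=d$ of the ``only if'' direction. The only point requiring care is the degenerate situation in which $c$ or $d$ vanishes, which collapses one factor to the trivial ring and must be ruled out for the conclusion to hold. In every remaining case the principal ideal $I(c,d)$ is a concrete witness, which establishes the claim.
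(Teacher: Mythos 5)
Your proof is correct and takes essentially the same route the paper intends: the corollary is stated as a direct consequence of Theorem~\ref{th3}, obtained by choosing $a=c$ and $b=d$, exactly as you do. Your extra remark that $c\neq0$ and $d\neq0$ must be tacitly assumed is a legitimate refinement the paper glosses over --- for instance $\gcd(0,2)=2\neq1$, yet $\{0\}\times2\mathbb Z$ has no skew ideals, so the statement read literally fails in that degenerate case.
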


\begin{example}
The principal ideal
\[
I(2,2)=(2,2)\mathbb Z+(4\mathbb Z\times4\mathbb Z)=(4\mathbb Z\times4\mathbb Z)\cup((2+4\mathbb Z)\times(2+4\mathbb Z))
\]
of $2\mathbb Z\times2\mathbb Z$ generated by $(2,2)$ is skew since $(0,0),(2,2)\in I(2,2)$, but $(0,2)\notin I(2,2)$. This is in accordance with Theorem~\ref{th3}.
\end{example}

Using Theorem~\ref{th1} we can generalize the situation described in Example~\ref{ex1}. Namely, we can derive a Mal'cev type condition characterizing varieties of commutative rings whose ideals are directly decomposable.

\begin{theorem}\label{th2}
Let $\mathcal V$ be a variety of commutative rings. Then $\mathcal V$ has directly decomposable ideals if and only if there exists a unary term $t$ satisfying the identity $xt(x)\approx x$.
\end{theorem}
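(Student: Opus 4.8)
The plan is to prove both implications via condition (iii) of Theorem~\ref{th1}, which reduces direct decomposability to the requirement that membership of $(a,b)$ in an ideal forces $(a,0)$ and $(0,b)$ into it. The sufficiency direction is then a one-line multiplication, while the necessity direction calls for a Mal'cev/Fraser--Horn style argument that tests the hypothesis on a free algebra and squeezes the identity out of the resulting equations.

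For \emph{sufficiency}, suppose a unary term $t$ with $xt(x)\approx x$ exists. I would take arbitrary $\mathbf R_1,\mathbf R_2\in\mathcal V$, an ideal $I$ of $\mathbf R_1\times\mathbf R_2$, and $(a,b)\in I$, and simply observe that
\[
(a,0)=(a,b)\cdot(t(a),0),\qquad(0,b)=(a,b)\cdot(0,t(b)),
\]
using $a\,t(a)=a$, $b\,t(b)=b$ together with $b\cdot0=0$, $a\cdot0=0$. Since $I$ is an ideal and $(t(a),0),(0,t(b))\in R_1\times R_2$, both $(a,0)$ and $(0,b)$ lie in $I$, so Theorem~\ref{th1}(iii) gives direct decomposability; the same multiplication trick applied coordinate-wise handles products of more than two factors, so $\mathcal V$ has directly decomposable ideals.

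For \emph{necessity}, I would let $\mathbf F=\mathbf F_{\mathcal V}(x)$ be the free ring in $\mathcal V$ on a single generator $x$, form $\mathbf F\times\mathbf F$, and consider the ideal $I=I(x,x)$ generated by the diagonal element $(x,x)$. Because $\mathbf F\in\mathcal V$, the ideals of $\mathbf F\times\mathbf F$ are directly decomposable, so Theorem~\ref{th1}(iii) forces $(x,0)\in I$. Writing out membership in a principal ideal of a commutative ring produces unary terms $r_1,r_2$ in $x$ and an integer $n$ with
\[
(x,0)=(x,x)(r_1,r_2)+n(x,x)=(xr_1+nx,\,xr_2+nx),
\]
i.e.\ the coordinate equations $xr_1+nx=x$ and $xr_2+nx=0$ hold in $\mathbf F$.

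The crux is eliminating the ``integer part'' $nx$: the first equation alone only yields $x\approx xr_1+nx$, and $nx$ is in general not of the form $x\cdot(\text{term})$, so it cannot be absorbed. The \emph{second} equation is exactly what rescues this, since it rewrites $nx\approx -xr_2$. Substituting gives $x=xr_1-xr_2=x(r_1-r_2)$ in $\mathbf F$; setting $t:=r_1-r_2$, a genuine unary term, and using that an equation between terms in $x$ that holds in the free algebra $\mathbf F$ on the generator $x$ is an identity of $\mathcal V$, I conclude $\mathcal V\models xt(x)\approx x$. I expect this elimination step --- recognizing that the vanishing second coordinate is precisely the device that converts the stray additive multiple $nx$ into a multiplicative term --- to be the only genuinely non-routine point, the remainder being bookkeeping about principal ideals in commutative rings and the standard free-algebra correspondence between equations and identities.
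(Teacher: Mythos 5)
Your proof is correct and follows essentially the same route as the paper: the sufficiency direction is the identical multiplication by $(t(a),0)$ and $(0,t(b))$ combined with Theorem~\ref{th1}(iii), and the necessity direction is the same free-algebra argument on $\mathbf F(x)\times\mathbf F(x)$, extracting $(x,0)\in I(x,x)$ and setting $t:=r_1-r_2$ to cancel the integer multiple $nx$, exactly as the paper does with $t:=r-s$. The step you single out as the crux --- using the vanishing second coordinate to eliminate $nx$ --- is precisely the paper's device.
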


\begin{proof}
First assume $\mathcal V$ to have directly decomposable ideals. Consider the free commutative ring $\mathbf F(x)=(F(x),+,\cdot)$ with one free generator $x$ and the principal ideal
\begin{align*}
I(x,x) & =\{n(x,x)+(x,x)(r(x),s(x))\mid n\in\mathbb Z\text{ and }(r(x),s(x))\in F(x)\times F(x)\}= \\
       & =\{(nx+xr(x),nx+xs(x))\mid n\in\mathbb Z\text{ and }r(x),s(x)\in F(x)\}
\end{align*}
of $\mathbf F(x)\times\mathbf F(x)$ generated by $(x,x)$. Since $I(x,x)$ is directly decomposable and $(0,0),$ $(x,x)\in I(x,x)$, we have $(x,0)\in I(x,x)$. Hence there exist some $n\in\mathbb Z$ and $r(x),s(x)\in F(x)$ with $(nx+xr(x),nx+xs(x))=(x,0)$ which implies $xt(x)=x$ with $t(x):=r(x)-s(x)$. Conversely, assume there exists a unary term $t$ satisfying $xt(x)\approx x$. Let $\mathbf R_1=(R_1,+,\cdot),\mathbf R_2=(R_2,+,\cdot)\in\mathcal V$ and $I\in\Id(\mathbf R_1\times\mathbf R_2)$ and assume $(a,b)\in I$. Then $(a,0)=(a,b)(t(a),0)\in I$ and $(0,b)=(a,b)(0,t(b))\in I$. According to (iii) of Theorem~\ref{th1}, $I$ is directly decomposable.
\end{proof}

Using Theorem~\ref{th2}, we can explicitly describe all varieties of commutative rings having directly decomposable ideals.

\begin{corollary}
A variety of commutative rings has directly decomposable ideals if and only if it satisfies an identity of the form $\sum\limits_{i=2}^na_ix^i\approx x$ with $n\geq2$ and $a_2,\ldots,a_n\in\mathbb Z$.
\end{corollary}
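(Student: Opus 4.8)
The plan is to reduce everything to Theorem~\ref{th2} and then carry out a purely syntactic analysis of unary terms over commutative rings. By Theorem~\ref{th2}, $\mathcal V$ has directly decomposable ideals exactly when there is a unary term $t$ with $xt(x)\approx x$ valid in $\mathcal V$. So I would reformulate the task as follows: describe precisely which identities of the form ``$xt(x)\approx x$'' can occur, and match them against the target shape $\sum_{i=2}^{n}a_ix^i\approx x$.

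The key step I would establish first is a normal form for unary terms. Since the signature of (commutative) rings is $(+,\cdot)$ together with the additive group operations but \emph{no} multiplicative unit and no constant other than $0$, every unary term $t(x)$ is provably equal, using associativity, commutativity and distributivity, to a polynomial $t(x)=\sum_{i=1}^{m}c_ix^i$ with integer coefficients $c_1,\dots,c_m\in\mathbb Z$ and with \emph{no} constant term. The integer coefficients arise from collapsing repeated sums into integer multiples, and the absence of a constant term reflects that every monomial contains at least one factor $x$.

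Given this normal form, both directions amount to bookkeeping of exponents. For the forward direction, I would take the term $t$ supplied by Theorem~\ref{th2}, write it as $\sum_{i=1}^{m}c_ix^i$, and compute
\[
xt(x)=\sum_{i=1}^{m}c_ix^{i+1}=\sum_{i=2}^{n}a_ix^i,\qquad n=m+1\geq2,\ a_i=c_{i-1},
\]
so that $xt(x)\approx x$ is already an identity of the required form. For the converse, given an identity $\sum_{i=2}^{n}a_ix^i\approx x$ with $n\geq2$, I would set $t(x):=\sum_{i=2}^{n}a_ix^{i-1}$; then $xt(x)=\sum_{i=2}^{n}a_ix^i\approx x$, and Theorem~\ref{th2} delivers directly decomposable ideals. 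The only point needing genuine care is the normal-form claim in the second step---in particular the observation that no unit symbol is available, so a constant term cannot appear; everything after that is routine.
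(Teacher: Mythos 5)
Your proposal is correct and follows essentially the same route as the paper: the paper's proof likewise invokes Theorem~\ref{th2} together with the observation that unary terms of commutative rings (having no unit in the signature) are exactly the polynomials $\sum_{i=1}^{n}a_ix^i$ with integer coefficients and no constant term, the rest being the same shift of exponents you carry out. Your write-up merely makes explicit the normal-form argument and the two substitutions that the paper compresses into the phrase ``this follows immediately.''
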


\begin{proof}
This follows immediately from Theorem~\ref{th2} since the unary terms in a variety of commutative rings are exactly the terms of the form $\sum\limits_{i=1}^na_ix^i$ with $n\geq1$ and $a_1,\ldots,a_n\in\mathbb Z$.
\end{proof}

\begin{corollary}
The variety of Boolean rings has directly decomposable ideals since it satisfies the identity $x^2\approx x$.
\end{corollary}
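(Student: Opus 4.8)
The plan is to invoke Theorem~\ref{th2}, whose Mal'cev-type condition is satisfied almost tautologically by the defining identity of Boolean rings. The essential observation is that the identity $x^2\approx x$ is just $x\cdot x\approx x$, which is precisely the identity $xt(x)\approx x$ of Theorem~\ref{th2} under the choice $t(x):=x$.

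First I would confirm that the variety of Boolean rings is a variety of commutative rings, so that Theorem~\ref{th2} is applicable. This is standard: expanding $(x+y)^2\approx x+y$ by means of $x^2\approx x$ yields $xy+yx\approx0$, while $(x+x)^2\approx x+x$ forces $2x\approx0$; together these give $xy\approx yx$, so every Boolean ring is automatically commutative. I would then exhibit the required unary term by simply setting $t(x):=x$, for which $xt(x)=x\cdot x=x^2=x$ holds identically throughout the variety.

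With this witness in hand, the equivalence in Theorem~\ref{th2} immediately yields that the variety of Boolean rings has directly decomposable ideals, which is the assertion to be proved. I expect no genuine obstacle here: the entire content of the corollary is the recognition that the idempotent law is a special instance of the characterizing condition of Theorem~\ref{th2}, with the identity term playing the role of $t$.
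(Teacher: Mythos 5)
Your proof is correct and is essentially the paper's own argument: the corollary follows from Theorem~\ref{th2} by taking $t(x):=x$, so that $xt(x)=x^2\approx x$. Your additional verification that Boolean rings are commutative (hence that Theorem~\ref{th2} applies) is a standard fact the paper takes for granted, and it is a reasonable point to make explicit.
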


However, we can also consider classes of commutative rings which do not form a variety. In this case we cannot apply Theorem~\ref{th2} in order to prove that rings belonging to such a class have directly decomposable ideals. A typical example is the following:

\begin{example}\label{ex3}
Let $\mathbf R=(R,+,\cdot)$ be a commutative ring, $\mathbf F=(F,+,\cdot)$ a field, $I\in\Id(\mathbf R\times\mathbf F)$ and $J:=\pi_1(I)$. If $I\subseteq R\times\{0\}$ then $I=J\times\{0\}$. Now assume $I\not\subseteq R\times\{0\}$. Then there exists some $(a,b)\in I$ with $b\neq0$. If $(c,d)\in J\times F$ then there exists some $e\in F$ with $(c,e)\in I$ and hence
\[
(c,d)=(c,e)+(a,b)(0,b^{-1}(d-e))\in I
\]
showing $I=J\times F$. Hence, $\mathbf R\times\mathbf F$ has directly decomposable ideals. By induction we obtain that $\mathbf R\times\mathbf F_1\times\cdots\times\mathbf F_n$ has directly decomposable ideals if $n\geq1$ and $\mathbf F_1,\ldots,\mathbf F_n$ are fields.
\end{example}

In particular, we can consider the case where $\mathbf R$ denotes the commutative ring $\mathbf K$ from Example~\ref{ex2}. Then $\mathbf K\times\mathbf F$ has directly decomposable ideals despite the fact that $\mathbf K\times\mathbf K$ does not have this property. Similarly, if $c$ and $d$ are integers satisfying $\gcd(c,d)\neq1$ then $(c\mathbb Z\times d\mathbb Z)\times\mathbf F$, $c\mathbb Z\times\mathbf F$ and $d\mathbb Z\times\mathbf F$ have directly decomposable ideals though $c\mathbb Z\times d\mathbb Z$ does not have this property.

\begin{remark}
Note that Example~\ref{ex3} remains valid in case that $\mathbf R$ is not commutative.
\end{remark}

{\bf Acknowledgement.} The authors thank the referee for his valuable suggestions which increased the quality of the paper.

Authors' addresses:

Ivan Chajda \\
Palack\'y University Olomouc \\
Faculty of Science \\
Department of Algebra and Geometry \\
17.\ listopadu 12 \\
771 46 Olomouc \\
Czech Republic \\
ivan.chajda@upol.cz

G\"unther Eigenthaler \\
TU Wien \\
Faculty of Mathematics and Geoinformation \\
Institute of Discrete Mathematics and Geometry \\
Wiedner Hauptstra\ss e 8-10 \\
1040 Vienna \\
Austria \\
guenther.eigenthaler@tuwien.ac.at

Helmut L\"anger \\
TU Wien \\
Faculty of Mathematics and Geoinformation \\
Institute of Discrete Mathematics and Geometry \\
Wiedner Hauptstra\ss e 8-10 \\
1040 Vienna \\
Austria, and \\
Palack\'y University Olomouc \\
Faculty of Science \\
Department of Algebra and Geometry \\
17.\ listopadu 12 \\
771 46 Olomouc \\
Czech Republic \\
helmut.laenger@tuwien.ac.at
\end{document}